\newtheorem{theorem}{Theorem}[section]
\newtheorem{lemma}[theorem]{Lemma}
\newtheorem{define}[theorem]{Definition}
\newtheorem{coro}[theorem]{Corollary}
\newtheorem{propo}[theorem]{Proposition}
\newtheorem{remark}[theorem]{Remark}
\newcommand{\ff}[1]{{\mathbb F}_{\! #1}}
\newcommand{\fff}[1]{{\overline{\mathbb F}}_{\! #1}}
\newcommand{\N}{\mathbb N}
\newcommand{\Z}{\mathbb Z}
\newcommand{\GL}{\mathrm{GL}}
\newcommand{\PGL}{\mathrm{PGL}}
\newcommand{\ord}{\mathrm{ord}}
\newcommand{\an}[1]{\langle #1\rangle}
\begin{document}

\begin{frontmatter}

% Title, authors and addresses

% use the thanksref command within \title, \author or \address for footnotes;
% use the corauthref command within \author for corresponding author footnotes;
% use the ead command for the email address,
% and the form \ead[url] for the home page:
% \title{Title\thanksref{label1}}
% \thanks[label1]{}
% \author{Name\corauthref{cor1}\thanksref{label2}}
% \ead{email address}
% \ead[url]{home page}
% \thanks[label2]{}
% \corauth[cor1]{}
% \address{Address\thanksref{label3}}
% \thanks[label3]{}

\title{On the multiplicative order of the roots of $bX^{q^r+1}-aX^{q^r}+dX-c$}

\author[br]{F.E. Brochero Mart\'{i}nez}
\ead{fbrocher@mat.ufmg.br}
\author[gr]{Theodoulos Garefalakis}
\ead{tgaref@uoc.gr}
\author[br]{Lucas Reis}
\ead{lucasreismat@gmail.com}
\author[gr]{Eleni Tzanaki}
\ead{etzanaki@uoc.gr}

\address[br]{Departamento de Matem\'{a}tica, Universidade Federal de Minas Gerais, Belo Horizonte, MG, 30123-970, Brazil}
\address[gr]{Department of Mathematics and Applied Mathematics, University of Crete, 70013 Heraklion, Greece}

%\journal{Finite Fields and Their Applications}

\begin{abstract}
In this paper, we find a lower bound for the order of the group $\langle \theta+\alpha\rangle \subset \fff{q}^*$, where $\alpha\in \ff{q}$, $\theta$ is a generic root of the polynomial 
$F_{A,r}(X)=bX^{q^r+1}-aX^{q^r}+dX-c\in \ff{q}[X]$ and $ad-bc\ne0$.
\end{abstract}

\begin{keyword}
Multiplicative order; Group action on irreducible polynomials; Invariant polynomial

% PACS codes here, in the form: \PACS code \sep code
%\PACS 
\end{keyword}
\end{frontmatter}

%\pagebreak

% main text

%%%%%%%%%%%%%%%% Introduction %%%%%%%%%%%%%%%% 

%\doublespace

\section{Introduction}
Let $\ff{q}$ be the field with $q$ elements, where $q$ is a power of a prime 
$p$. Given a positive integer $n$, it is natural to ask how to find elements of 
very high order in the multiplicative group 
$\left(\frac{\ff{q}[X]}{f(x)}\right)^*$, where $f(x)$ is an irreducible 
polynomial of degree $n$.    
Elements of this type are used in the AKS algorithm (see \cite{aks}), for 
determining primality in polynomial time. 
This question is closely related to the problem of efficiently  
constructing  a primitive element of a given finite field, which has practical 
applications in Coding Theory and Cryptography.
This last  problem has been considered by many authors: In \cite{Gao}, Gao 
gives an algorithm for explicitly constructing  elements for a general 
extension $\ff{q^n}$ of the field $\ff{q}$, with order bounded below by a 
function of the form $\exp \left(c(p)\frac {\log^2\log q}{\log\log\log 
q}\right)$, where $c(p)$ 
depends only  on the characteristic of the field.
 In \cite{Cheng}, Cheng shows how to find, given $q$ and $N$,     an integer $n$ 
 in the interval $[N,2qN]$, and a $\theta$ in the field $\ff{q^n}$ with order 
 larger than $5.8^{n\log q/\log n}$. 
 In \cite{Popovych1} and \cite{Popovych2}, Popovych considers the case where  
 $f(X)=\Phi_r(X)$, the $r$-th cyclotomic polynomial,  and  $f(X)=X^n-a$ are 
 irreducible polynomials in $\ff{q}[X]$  and finds a lower bound of the order 
 of  $\langle \theta+c\rangle$, where $\theta$ is a root of $f(X)=0$.  
Finally in \cite{MartinezReis2016}, the authors consider the same problem  with the polynomial $f(X)=X^p-X+c\in \ff{q}[X]$.

On the other hand, in \cite{ST12}, Stichtenoth and Topuzo\v{g}lu show that, given a matrix $[A]=\left[\begin{pmatrix} a&b\\ c&d\end{pmatrix}\right]\in \PGL_2(\ff{q})$, 
every  irreducible factor $f$ of $F_{A,r}(X)=bX^{q^r+1}-aX^{q^r}+dX-c$ in $\ff{q}[X]$ is invariant by an appropriate natural action of $[A]$ and reciprocally, every irreducible polynomial $f$,
 invariant by the action of $[A]$, is a factor of $F_{A,r}(X)$ for some $r\ge 0$.
 This relation is used in \cite{ST12} to estimate, asymptotically,  the number of irreducible monic polynomial of given degree and invariant by $[A]$ and they conclude that, in general, the irreducible factors of $F_{A, r}(X)$ has degree $Dr$, where $D$ is the order of $[A]$ in $ \PGL_2(\ff{q})$.

In this paper we study the problem of finding elements of high order arising from fields $\left(\frac{\ff{q}[X]}{f(X)}\right)^*$, where $f(X)$ is an irreducible factor of $F_{A, r}(X)$ and we obtain the following:
\begin{theorem}\label{main} Let $\alpha\in \ff{q}$, $A \in \GL_2(\ff{q})$, $[A]\ne [I]$ and $\theta$ be a generic  root of $F_{A,r}$, i.e. $\theta\in \fff{q}$ satisfies $\dim_{\ff{q}}\ff{q}[\theta]=Dr$ where  $D=\ord([A])$ and $r>2$.  The multiplicative order of $\theta+\alpha$ is bounded below by
\begin{equation}\frac1{\sqrt{2}\pi D} \sqrt{\frac {r-2}{r+2}}\cdot\left(\frac { (r+2)^{r+2}}{(r-2)^{r-2}}\right)^{\frac D4}  \exp\left(-\frac 5{24D}\cdot\frac {r^2+4}{r^2-4}\right),
\end{equation}
in the case that $(1,0)$ and $(0,1)A^j$ are linearly independent for all $j$ and
\begin{equation}\frac{\sqrt{2}}{\pi D} \sqrt{\frac {r-1}{r+1}}\cdot\left(\frac { 4(r+1)^{r+1}}{(r-1)^{r-1}}\right)^{\frac D4}  \exp\left(-\frac 1{24D}\cdot\frac {5r^2+3}{r^2-1}\right),
\end{equation}
otherwise.
\end{theorem}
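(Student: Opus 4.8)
The plan is to realize the $q^r$-power Frobenius as a M\"obius transformation and then manufacture many elements of $\langle\theta+\alpha\rangle$ as products of Frobenius conjugates of $\theta+\alpha$. First, solving $F_{A,r}(\theta)=0$ for $\theta^{q^r}$ gives $\theta^{q^r}=\frac{-d\theta+c}{b\theta-a}$; since $ad-bc\neq0$ this is the action on $\theta$ of the matrix $(\det A)(A^{\mathrm T})^{-1}$, hence of an element of $\PGL_2(\ff{q})$ of the same order $D$ as $[A]$. Conjugating by the translation $z\mapsto z+\alpha$ and using $\alpha\in\ff{q}$, the element $u:=\theta+\alpha$ satisfies $u^{q^r}=[B]\cdot u$ for some $B\in\GL_2(\ff{q})$ with $\ord([B])=D$, and iterating Frobenius gives $u^{q^{jr}}=[B^j]\cdot u$ for every $j$. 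Thus each conjugate $u^{q^{jr}}$ is a degree-one rational function $\frac{s_j u+t_j}{v_j u+w_j}$ of $u$ with coefficients in $\ff{q}$; this is the crucial structural input, since a priori $u^{q^{jr}}$ is only known to be a very large power of $u$.

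Next, for a set $\mathcal R$ of exponent vectors $\vec e=(e_0,\dots,e_{D-1})\in\Z_{\ge0}^{D}$ I would form $\eta_{\vec e}=\prod_{j=0}^{D-1}\bigl(u^{q^{jr}}\bigr)^{e_j}\in\langle\theta+\alpha\rangle$, so that the multiplicative order of $\theta+\alpha$ is at least $\#\{\eta_{\vec e}:\vec e\in\mathcal R\}$, and then prove the map $\vec e\mapsto\eta_{\vec e}$ is injective on $\mathcal R$. If $\eta_{\vec e}=\eta_{\vec e'}$, clearing denominators in $\prod_j\bigl(\tfrac{s_j u+t_j}{v_j u+w_j}\bigr)^{f_j}=1$ with $\vec f=\vec e-\vec e'$ produces a polynomial $H\in\ff{q}[X]$ with $H(u)=0$ whose degree is controlled by $\sum_j|f_j|$ (the $j=0$ term, where the denominator is $1$, being treated separately). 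Since $[\ff{q}(u):\ff{q}]=[\ff{q}(\theta):\ff{q}]=Dr$, a sufficiently small degree forces $H\equiv0$, i.e.\ $\prod_j([B^j]\cdot X)^{f_j}$ is a constant rational function. Its divisor on $\mathbb{P}^1$ is a signed sum of the points lying in the $\langle[B]\rangle$-orbits of $0$ and of $\infty$, so $H\equiv0$ can hold only along relations dictated by coincidences within and between those two orbits. This is exactly where the hypothesis enters: when $(1,0)$ and $(0,1)A^{j}$ are independent for all $j$ the two orbits are disjoint and of full length $D$, so $H\equiv0$ forces $\vec f=0$ and $\vec e\mapsto\eta_{\vec e}$ is injective; otherwise the orbits collapse and nonzero ``periodic'' $\vec f$ survive, which I would neutralize by restricting $\mathcal R$ to a transversal, which roughly halves the number of genuinely free coordinates and accounts for the much weaker second bound.

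Finally, I would take $\mathcal R$ as large as the injectivity constraint allows — a simplex-type lattice region cut out by a bound of the shape $\sum_j e_j<Dr$ (respectively the analogous region in about $D/2$ coordinates in the exceptional case, and here the condition $r>2$ keeps the regions of the intended size) — count its lattice points as a binomial coefficient, and estimate that coefficient by Stirling's formula with explicit remainder. The two closed-form expressions, including the prefactors $\tfrac1{\sqrt2\pi D}$, $\sqrt{(r-2)/(r+2)}$ and the exponential correction factors, are precisely the main term and the error term of this Stirling estimate, the difference between $r\pm2$ and $r\pm1$ in the two cases stemming from the different sharp degree bound on $H$. I expect the main obstacle to be exactly that sharp bound in the second step: deciding precisely when $\prod_j([B^j]\cdot X)^{f_j}$ can be constant for $\vec f\neq0$, i.e.\ a careful analysis of the orbit geometry of $\langle[B]\rangle$ on $\{0,\infty\}$ (equivalently, the fixed-point structure of the powers of $[A]$), since this is what simultaneously produces the dichotomy in the statement and fixes the constants that then propagate through the Stirling computation.
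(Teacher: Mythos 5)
Your overall architecture coincides with the paper's: reduce to $\alpha=0$ by passing to a conjugate matrix of the same order; read $F_{A,r}(\theta)=0$ as $\theta^{q^{r}}=[A]\circ\theta$, so that $\theta^{q^{jr}}=[A^{j}]\circ\theta$ is a degree-one rational function of $\theta$; prove injectivity of $\vec e\mapsto\prod_{j}(\theta^{q^{jr}})^{e_j}$ on a suitable exponent region by clearing denominators and using that $\theta$ has degree $Dr$; decide when $\prod_{j}([A^{j}]\circ X)^{f_j}$ can be constant via the (non-)proportionality of the rows of the powers $A^{j}$ --- your ``orbits of $0$ and $\infty$'' picture is exactly the content of the paper's lemmas on $(a_j,b_j)$ and $(c_j,d_j)$, including the propagation of a single coincidence to a periodic relation, which yields the dichotomy and the sacrifice of $\gcd(g,D)\le\lfloor D/2\rfloor$ coordinates in the degenerate case; then count and apply a Stirling-type (Sasv\'ari) estimate.

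The genuine gap is quantitative and lies in your choice of exponent region. You take $\vec e\in\Z_{\ge0}^{D}$ cut out by one simplex inequality and propose to count ``a binomial coefficient.'' With nonnegative exponents the injectivity argument forces $\sum_j e_j< Dr/2$ in the non-triangular case (the cleared-denominator identity has degree $\sum_j|e_j-e'_j|\le\sum_j e_j+\sum_j e'_j$), so your set has at most $\binom{Dr/2+D}{D}\approx\frac{1}{\sqrt{2\pi D}}\bigl(\tfrac{e(r+2)}{2}\bigr)^{D}$ elements, which falls short of the claimed bound $\approx\frac{1}{\sqrt 2\pi D}\,(e(r+2))^{D}$ by a factor of essentially $2^{D}$; no refinement of the Stirling step can recover this. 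The paper instead uses \emph{signed} exponent vectors with separate budgets $\sum_{u_j>0}u_j\le s$ and $\sum_{u_j<0}|u_j|\le t$ under the same constraint $s+t<Dr/2$ (negative powers are available since $\theta$ is a unit, and they simply move factors to the other side, leaving the degree bound unchanged); the resulting set $I_{t,t,0}$ with $t=\lfloor Dr/4\rfloor$ has size at least $\binom{D(r+2)/4}{D}\binom{2D}{D}$, and it is precisely the extra factor $\binom{2D}{D}\approx 4^{D}/\sqrt{\pi D}$, coming from the free choice of sign pattern, that produces the stated constants and the prefactor $\frac{1}{\sqrt 2\pi D}$. The same issue costs you a factor of about $2^{D/2}$ in the second bound. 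Enlarging $\mathcal R$ to this two-sided region before counting is the missing idea; the rest of your outline matches the paper.
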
 

\begin{remark}  For every $\epsilon>0$ and $r>R_\epsilon$, the lower bound (1) is greater than 
$$\frac 1{\sqrt 2\pi D} ((e-\epsilon)(r+2))^D$$
and  the lower bound (2) is greater than
$$\frac{\sqrt 2}{\pi D} (2 (e-\epsilon)(r+1))^{D/2}.$$
\end{remark}

We note that, $\theta$ is a root of $F_{A,r}$ if and only if $\theta+\alpha$ is root of $F_{B,r}$, where
$$B=\begin{pmatrix}a+b\alpha&b\\ c+d\alpha-a\alpha-b\alpha^2&d-b\alpha \end{pmatrix}\in \GL_2(\ff{q}),$$ 
and the matrices $A$ and $B$ have the same eigenvalues, hence their multiplicative order are the same. Since our bounds essencially depend of the order of $A$ and $r$, in the following, unless otherwise stated, we assume that $\alpha=0$.   

In  addition, in the case when $A$ is a triangular matrix,  this lower bound can also be  improved.

\section{Preliminaries}
Throughout  this paper,  $\ff{q}$ is the finite field with $q$ elements, where $q$ is a power
of a prime $p$; given a matrix $A=\begin{pmatrix} a&b\\ c&d\end{pmatrix}\in \GL_2(\ff{q})$,  $[A]$ denotes its class in 
$\PGL_2(\ff{q})$ and $D=\ord([A])$. Observe that, in the case   $\det(A)=1$ and $A$ is diagonalizable, the eigenvalues of $A$ are $\gamma$ and $\gamma^{-1}$ and we have that  $
D=\ord([A]) = \frac{\ord \gamma}{(\ord \gamma, 2)}$ and then $A^D=(-1)^{D+1} I$.
 In addition,  for each  non-negative integer $r$,   $F_{A,r}(X)$ denotes the polynomial 
$ bX^{q^r+1}-aX^{q^r}+dX-c$. 
%, $GL_2(\ff_q)$ denotes the general linear group of degree 2 with coefficients in $\ff{q}$ and $PGL_2(\ff_q)$ denote the projective linear group. 

There is an action of the general linear group $\GL_2(\ff{q})$
on the set of irreducible polynomials of degree at least 2, which was 
studied in \cite{Gar11,ST12}. In this work, we adopt the notation of \cite{ST12}. 
\begin{define}
Let $\displaystyle
A=\left( \begin{array}{cc}a&b\\ c&d\end{array}\right)\in \GL_2(\ff{q})$. 
For an irreducible polynomial $f(X)\in\ff{q}[X]$ of degree $n\geq 2$ and  $\theta\in \fff{q}\setminus\ff{q}$,  define
\begin{enumerate}
\item $\displaystyle 
(A \circ f)(X) := (bX+d)^n\cdot f\left(\frac{aX+c}{bX+d}\right).$
\item $[A] \circ f(X):=$ the unique monic polynomial $g(X)$ such that $  (A \circ f)(X)=\lambda g(X)$ for some $\lambda\in \fff{q}$.
\item 
$\displaystyle  [A]\circ\theta= A\circ \theta := \frac{d \theta-c}{-b\theta +a}
$.
\end{enumerate}
\end{define}
It turns out that the above rules define actions of $\GL_2(\ff{q})$ on 
the set of irreducible polynomials of degree at least 2 in $\ff{q}[X]$
and on $\fff{q}\setminus \ff{q}$ respectively and these actions are closely
related: from Lemma 2.7 in \cite{ST12}, it  follows that $\theta$ is a root of $f$ if and only if $A\circ \theta$ is a root of $A \circ f$.

One of the goals of \cite{ST12} is the characterization and counting the monic irreducible
polynomials that are fixed by the action of a given matrix. The following theorems provide such a characterization.
\begin{theorem}[\cite{ST12}, Theorems 4.2 and  4.5 ]
Let $f(X)\in \ff{q}[X]$ be a monic irreducible polynomial of degree $n\ge 2$. The following are equivalent:
\begin{enumerate}
	\item $[A]\circ f = f$
	\item $f\ |\ F_{A,r}$ for some   non-negative integer $r<n$.
\end{enumerate}
%where $F_{A,r}(X) = bX^{q^r+1}-aX^{q^r}+dX-c,\ r\in$.
In addition, every irrreducible factor of $F_{A,r}$ has degree $\le 2$ or $Dk$, where $k|r$ and $\gcd(\frac rk,D)=1$.  
\end{theorem}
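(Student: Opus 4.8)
The plan is to reduce everything to a single reformulation of the condition $f\mid F_{A,r}$. Writing $A=\begin{pmatrix}a&b\\c&d\end{pmatrix}$ and $F_{A,r}(X)=X^{q^r}(bX-a)+(dX-c)$, one sees at once that for a root $\theta\in\fff q\setminus\ff q$ of $f$ the equation $F_{A,r}(\theta)=0$ is equivalent, after dividing by $b\theta-a$ (which is nonzero because $\theta\notin\ff q$ and $A\in\GL_2(\ff q)$), to
\[
\theta^{q^r}=\frac{d\theta-c}{-b\theta+a}=A\circ\theta.
\]
So $f\mid F_{A,r}$ holds precisely when the $r$-th Frobenius power of $\theta$ coincides with the M\"obius image $A\circ\theta$. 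First I would record this identity; it is the engine for both halves of the statement.

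For the equivalence I would argue as follows. If $f\mid F_{A,r}$ with $0\le r<n$, then $A\circ\theta=\theta^{q^r}$ is a Frobenius conjugate of $\theta$, hence a root of $f$; by the root-correspondence quoted from Lemma 2.7 of \cite{ST12}, $A\circ\theta$ is a root of $[A]\circ f$, so $f$ and $[A]\circ f$ are monic irreducible polynomials of the same degree $n$ sharing the root $A\circ\theta$, forcing $[A]\circ f=f$. Conversely, if $[A]\circ f=f$, then $A\circ\theta$ is a root of $f$, hence $A\circ\theta=\theta^{q^r}$ for a unique $r$ with $0\le r<n$, and reversing the displayed computation yields $F_{A,r}(\theta)=0$, i.e. $f\mid F_{A,r}$. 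The only points needing care are that $[A]\circ f$ really has degree $n$ (true since $f$, being irreducible of degree $\ge 2$, does not vanish at $a/b\in\ff q\cup\{\infty\}$, so the leading coefficient $b^n f(a/b)$ of $A\circ f$ is nonzero) and that it is again irreducible, both of which come from the fact that the rules define an action on irreducible polynomials.

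For the statement on degrees, the key observation is that the M\"obius action of $A$ commutes with the Frobenius $\phi\colon x\mapsto x^q$, because $a,b,c,d\in\ff q$; hence a short induction upgrades the basic identity to $A^k\circ\theta=\theta^{q^{kr}}$ for every $k\ge0$. Consequently $A^k\circ\theta=\theta$ holds exactly when $n\mid kr$, that is, when $\tfrac{n}{\gcd(n,r)}\mid k$, so the least $m>0$ with $A^m\circ\theta=\theta$ is $m=\tfrac{n}{\gcd(n,r)}$. Putting $k_0=\gcd(n,r)$ gives $n=mk_0$ with $k_0\mid r$ and $\gcd(m,r/k_0)=1$, and since $A^D$ is scalar we obtain $m\mid D$.

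It remains to pin down $m$, and this is where I expect the real work to be. If $m=D$, then $n=Dk_0$ with $k_0\mid r$ and $\gcd(r/k_0,D)=1$, exactly the claimed shape. If instead $m<D$, then $A^m$ is a non-scalar matrix (as $D=\ord([A])$ does not divide $m$) fixing $\theta$; writing $A^m=\begin{pmatrix}\alpha&\beta\\\gamma&\delta\end{pmatrix}$, the fixed-point condition $A^m\circ\theta=\theta$ reads $\beta\theta^2+(\delta-\alpha)\theta-\gamma=0$, a polynomial of degree $\le 2$ over $\ff q$, so $\theta$ has degree $\le 2$. Thus every irreducible factor of degree $\ge 2$ falls into one of the two cases, while linear factors trivially have degree $\le 2$. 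The delicate step is checking that in the case $m<D$ this quadratic is genuinely nonzero (the coefficients cannot all vanish, else $A^m$ would be scalar) and that $\theta\notin\ff q$ rules out the degenerate translation subcase, which is precisely what forces $\deg f\le 2$ rather than an uncontrolled bound.
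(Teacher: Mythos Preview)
The paper does not prove this theorem; it is quoted from \cite{ST12} (Theorems 4.2 and 4.5 there) as a preliminary result, so there is no ``paper's own proof'' to compare against.  That said, your argument is correct and is essentially the standard one (and, as far as I can tell, the one in \cite{ST12}): the key reformulation $f\mid F_{A,r}\iff A\circ\theta=\theta^{q^r}$, the use of the root correspondence to match $f$ and $[A]\circ f$, and the dichotomy on the size $m$ of the $\langle[A]\rangle$-orbit of $\theta$ (either $m=D$, giving $n=Dk_0$ with $k_0=\gcd(n,r)$ and $\gcd(r/k_0,D)=1$, or $m<D$, forcing $\theta$ to satisfy a nontrivial quadratic over $\ff q$) are exactly the expected steps.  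The small checks you flag---nonvanishing of $b\theta-a$, nonzero leading coefficient of $A\circ f$, and nontriviality of the quadratic when $[A]^m\neq[I]$---are all handled correctly.
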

Expecifically, denoting
\[
 N_{A,r}(n)=\left|\left\{f\in\ff{q}[X]\ :\ f \mbox{ monic, irreducible }, \deg(f)=n, f|F_{A,r}\right\}\right|,
\]
%The factorization of the polynomial $F_{A,r}(X)$ is given by the following theorem.
it follows that
\begin{theorem}[\cite{ST12}, Theorems 5.2]
Let $A\in \GL_2(\ff{q})$ and $\ord([A])=D\geq 2$. Then 
\begin{enumerate}
 \item $N_{A,r}(n) = 0$, if $D \nmid n$, $n\geq 2$.
 \item $N_{A,r}(Dr)\sim \frac{q^r}{Dr}$, as $r\rightarrow \infty$,
\end{enumerate}
that is, all non-linear irreducible factors of $F_{A,r}$ have degree divisible by $D$
and almost all have degree $Dr$, as $r$ tends to infinity.
\end{theorem}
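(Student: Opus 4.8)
The plan is to translate the statement into a counting problem about the roots of $F_{A,r}$ and then exploit the interplay between the Frobenius map and the action of $[A]$. Writing $\phi$ for the $q$-power Frobenius on $\fff{q}$, a direct manipulation of $F_{A,r}(\theta)=0$ (as in Lemma 2.7 of \cite{ST12}) shows that $\theta\in\fff{q}$ is a root of $F_{A,r}$ if and only if $\theta^{q^r}=A\circ\theta$, i.e.\ $\phi^r(\theta)=A\circ\theta$. First I would record that $F_{A,r}$ is separable when $ad-bc\neq 0$: its derivative is $bX^{q^r}+d$, and substituting a common root into $F_{A,r}$ yields $\det(A)/b\neq 0$, so $F_{A,r}$ has exactly $q^r+1$ distinct roots (the case $b=0$, i.e.\ $A$ triangular, being handled separately and more easily). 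Since the entries of $A$ lie in $\ff{q}$, the maps $\phi$ and $\theta\mapsto A\circ\theta$ commute; hence if $f$ is an irreducible factor of $F_{A,r}$ with root $\theta$, then $\phi^r$ and $[A]$ agree on the whole set of roots of $f$, because every such root has the form $\phi^i(\theta)$.

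For part (1) I would argue on the level of permutations of the roots. Let $f\mid F_{A,r}$ be irreducible of degree $n\ge 2$ with root $\theta$. Frobenius acts on the $n$ roots of $f$ as a single $n$-cycle, so in the cyclic group it generates the element $\phi^r=[A]$ has order $n/\gcd(n,r)$. On the other hand $[A]^D=[I]$ acts trivially on $\mathbb{P}^1(\fff{q})$, so the order of $[A]$ as a permutation of the roots divides $D$. If $n\ge 3$ and this order were some $m<D$, then $[A]^m$ would fix at least three points of $\mathbb{P}^1(\fff{q})$ and hence equal $[I]$, forcing $D\mid m$, a contradiction; thus the order is exactly $D$ and $n/\gcd(n,r)=D$, whence $D\mid n$. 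This proves $N_{A,r}(n)=0$ for every $n\ge 3$ with $D\nmid n$, and recovers the degree constraint already quoted from Theorems 4.2 and 4.5. The single delicate case is $n=2$: there $n/\gcd(n,r)\in\{1,2\}$, so $[A]$ or $[A^2]$ fixes both roots, i.e.\ $\theta$ lies in the (at most two point) fixed locus of $[A]$ or of $[A^2]$; consequently $N_{A,r}(2)$ is bounded by a constant, a contribution that is negligible for part (2).

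For part (2) I would count roots by degree. By part (1) and the structure theorem, every root of $F_{A,r}$ has degree $\le 2$ or of the form $Dk$ with $k\mid r$ and $\gcd(r/k,D)=1$, so $q^r+1=E+\sum_{k\mid r,\ \gcd(r/k,D)=1}Dk\,N_{A,r}(Dk)$, where $E=O(1)$ collects the roots of degree $\le 2$. The crucial estimate is a bound on the number of roots of each intermediate degree $Dk$ with $k<r$. Writing $s=r/k$ and choosing $s'$ with $ss'\equiv 1\pmod D$, I would show that any root $\theta$ of $F_{A,r}$ of degree $Dk$ is a root of $F_{A^{s'},k}$: on the $Dk$-element Frobenius orbit of $\theta$ one has $\phi^{r}=[A]$, and since $kss'\equiv k\pmod{Dk}$ it follows that $\phi^k(\theta)=(\phi^{ks})^{s'}(\theta)=A^{s'}\circ\theta$, i.e.\ $\theta^{q^k}=A^{s'}\circ\theta$. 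Because $[A^{s'}]$ again has order $D$, the polynomial $F_{A^{s'},k}$ has degree $q^k+1$, so the number of degree-$Dk$ roots of $F_{A,r}$ is at most $q^k+1$.

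Summing these bounds over the proper divisors $k\mid r$ gives $\sum_{k\mid r,\ k<r}(q^k+1)=O\!\left(r\,q^{r/2}\right)=o(q^r)$, and together with $E=O(1)$ this yields $Dr\,N_{A,r}(Dr)=q^r+1-o(q^r)=q^r(1+o(1))$, hence $N_{A,r}(Dr)\sim q^r/(Dr)$, as claimed. The main obstacle is the intermediate-degree estimate: one must verify the twisted-Frobenius identity $\phi^k(\theta)=A^{s'}\circ\theta$ carefully (it is exactly where the hypothesis $\gcd(r/k,D)=1$ enters and where the commutation of $\phi$ with the action is used), and confirm that injecting the degree-$Dk$ roots into the root set of $F_{A^{s'},k}$ really does bound their number by $q^k+1$. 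Once this is in place the remaining steps---separability, the bounded degree-$\le 2$ contribution, and the divisor-sum estimate---are routine.
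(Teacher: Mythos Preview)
The paper does not contain a proof of this theorem: it is quoted verbatim from \cite{ST12} (their Theorem~5.2) as background, and the paper immediately moves on to the set-up for bounding the order of a generic root. So there is no ``paper's own proof'' to compare against; your task reduces to whether the argument you outline is correct.

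Your argument is essentially sound. The separability check is right, the identification $\phi^r(\theta)=A\circ\theta$ on roots is exactly Eq.~\eqref{eq:A-to-power} in the paper, and the key step in part~(2)---showing that a degree-$Dk$ root of $F_{A,r}$ with $k\mid r$, $\gcd(r/k,D)=1$ is already a root of $F_{A^{s'},k}$ for $s'\equiv (r/k)^{-1}\pmod D$---is carried out correctly via $kss'\equiv k\pmod{Dk}$. The divisor sum $\sum_{k\mid r,\,k<r}(q^k+1)=O(rq^{r/2})=o(q^r)$ then gives the asymptotic.

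One small gap: for part~(1) your fixed-point argument (a nontrivial element of $\PGL_2$ fixes at most two points of $\mathbb P^1$) only yields $N_{A,r}(n)=0$ for $n\ge 3$ with $D\nmid n$. For $n=2$ you retreat to ``bounded by a constant'', which is fine for part~(2) but does not establish $N_{A,r}(2)=0$ when $D\ge 3$, as the statement literally asserts. Indeed, the preceding theorem quoted from \cite{ST12} explicitly allows irreducible factors of degree $\le 2$, so the $n=2$ clause as printed here is a slight over-statement (or is meant to be read up to the exceptional factors of degree $\le 2$). Your treatment is adequate for the use this paper makes of the result, but you should flag that the $n=2$ case is exceptional rather than claim it is merely ``delicate''.
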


In order to bound the order of a generic root $\theta$ of the polynomial $F_{A,r}(X)$, i.e. $\theta$ is a root of  $F_{A,r}(X)$ such that $\dim_{\ff{q}}\ff{q}[\theta]=Dr$, it is enough to find a set $J\subset \N$ such that $\theta^i\ne \theta^j$ for every  $i\ne j$ elements of $J$ and thus $\ord (\theta)\ge |J|$. In order to find such set, observe that $\theta$ satisfies  the relation
 $\theta^{q^r} =  A\circ \theta,
$
and inductively we obtain that
\begin{equation}\label{eq:A-to-power}
 \theta^{q^{jr}} = A^j \circ \theta,\ \ \mbox{ for } j\in \Z_{\geq 0}. 
\end{equation}
%Given a matrix $A\in \GL_2(\ff{q})$, let $[A]$ denote its class in $\PGL_2(\ff{q})$. 
The main idea lies on the construction of an appropriate set $J$ having elements of the form
$ u_0+u_1q^r+\cdots +u_{D-1} q^{r(D-1)}$, with some restriction on $u_j\in \Z$, and use the relation (\ref{eq:A-to-power}) to show that the elements in $\{ \theta^j, j\in J\}$ are all different.

In order to prove Theorem \ref{main}, we need the following  technical lemmas:

\begin{lemma}\label{lemma:LI-1}
Let $A=\left( \begin{array}{cc}a&b\\ c&d\end{array}\right)\in \GL_2(\fff{q})$, with $\det(A)=1$ and $bc\neq 0$ .
Let  us %$D=\ord([A])$ and 
denote $(a_n,b_n)$ and $(c_n,d_n)$ the first and second
row, respectively, of $A^n$, $n\in\N$. Then for any $0\leq k<n<D$, the vectors $(a_n,b_n), (a_k,b_k)$
are linearly independent over $\fff{q}$. The same holds for the vectors $(c_n,d_n),(c_k,d_k)$. 
\end{lemma}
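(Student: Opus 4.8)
The plan is to argue by contradiction: suppose that for some $0\le k<n<D$ the vectors $(a_n,b_n)$ and $(a_k,b_k)$ are linearly dependent over $\fff{q}$. Since $A\in\GL_2$, neither row of $A^n$ can be the zero vector, so linear dependence means $(a_n,b_n)=\mu(a_k,b_k)$ for some $\mu\in\fff{q}^*$. Equivalently, the first row of $A^{n-k}$ is a scalar multiple of the first row of the identity: writing $m=n-k$ with $1\le m<D$, multiplying the relation $(a_n,b_n)=\mu(a_k,b_k)$ on the right by $A^{-k}$ gives $(a_m,b_m)=\mu\,(1,0)$, i.e.\ $b_m=0$ and $a_m=\mu$. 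So $A^m$ is lower triangular.

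Next I would exploit that $A$ is invertible with $\det A=1$ to push this further. If $A^m=\left(\begin{smallmatrix} a_m&0\\ c_m&d_m\end{smallmatrix}\right)$, then $\det A^m=a_md_m=1$, so $a_m,d_m\ne 0$. I claim this forces $m$ to be a multiple of $D$, contradicting $1\le m<D$. There are two natural routes. The cleaner one: a lower-triangular matrix in $\GL_2$ has, as a projective point, the fixed point $[0:1]\in\mathbb P^1(\fff{q})$ under the Möbius action, and its other fixed point is $[a_m-d_m : c_m]$ (or $[1:0]$ only in a degenerate case). Alternatively, argue directly on entries of powers of a triangular matrix. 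Actually the slickest approach uses the hypothesis $bc\ne 0$ on $A$ itself: if $A^m$ is lower triangular while $A$ is not (its $(1,2)$-entry is $b\ne 0$), I can derive a contradiction by looking at the structure of the group generated by $[A]$ in $\PGL_2(\fff{q})$. Concretely, $[A^m]$ fixes the point $[0:1]$; but then $[A]$ permutes the (at most two) fixed points of $[A^m]$, and a short case analysis — $[A]$ either fixes $[0:1]$ too, which contradicts $b\ne 0$ since a matrix fixing $[0:1]$ is lower triangular, or swaps the two fixed points of $[A^m]$, which forces $[A]^2$ to fix both and hence forces $m\mid 2$ with extra constraints — pins down that the only possibility compatible with $[A]^D=[I]$ being the exact order is $D\mid m$. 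I would organize this as: (i) reduce to $A^m$ lower triangular; (ii) show a lower-triangular element of $\GL_2$ that is a power of $A$ (with $b\ne 0$) and has order dividing $D$ must in fact be $\pm I$ times a scalar, i.e.\ $[A^m]=[I]$; (iii) conclude $D\mid m$, contradicting $m<D$.

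For the statement about the second rows $(c_n,d_n)$, I would not repeat the argument but instead reduce to the first case by a transpose/anti-diagonal conjugation trick: let $J=\left(\begin{smallmatrix}0&1\\1&0\end{smallmatrix}\right)$ and set $A' = J A^{\top} J$, or more simply observe that the second row of $A^n$ equals the first row of $(A^{\top})^n$ read appropriately; since $A^{\top}$ also lies in $\GL_2(\fff{q})$ with determinant $1$ and off-diagonal product $cb\ne 0$, and $\ord([A^{\top}])=\ord([A])=D$, the first part applies verbatim to $A^{\top}$ and yields the claim for the rows $(c_n,d_n)$. The main obstacle I anticipate is step (ii): carefully ruling out the small exceptional cases (e.g.\ $p=2$, or $A$ of order $2$ in $\PGL_2$, or eigenvalue collisions) so that "lower-triangular power of $A$ with $b\ne 0$" genuinely forces $[A^m]=[I]$; this is where the hypotheses $\det A=1$ and $bc\ne 0$ and $m<D=\ord([A])$ all have to be used together, and it may be cleanest to phrase it via the fixed-point set of the Möbius transformation $x\mapsto \frac{dx-c}{-bx+a}$ on $\mathbb P^1(\fff{q})$, noting $bc\ne 0$ means neither $0$ nor $\infty$ is fixed by $[A]$ itself.
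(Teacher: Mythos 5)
Your reduction is correct and leads to a complete proof, but by a genuinely different route from the paper's. You observe that linear dependence of $(a_n,b_n)=(1,0)A^n$ and $(a_k,b_k)=(1,0)A^k$ is equivalent to $b_{m}=0$ for $m=n-k$, i.e.\ to $A^{m}$ being lower triangular, and then argue projectively: $[A]$ commutes with $[A^m]$, hence permutes its fixed-point set on $\mathbb{P}^1(\fff{q})$, while $b\neq 0$ says $[A]$ does not fix $[0:1]$. The case analysis you flag as the main obstacle does close up cleanly: if $[A^m]=[I]$ then $D\mid m$, impossible since $0<m<D$; if $[A^m]\neq[I]$ has the single fixed point $[0:1]$, then $[A]$ must fix it, forcing $b=0$; if it has two fixed points $[0:1]$ and $P$, then $[A]$ either fixes $[0:1]$ (so $b=0$) or swaps the two, in which case $A$ is antidiagonal in the corresponding eigenbasis, so $[A]^2$ is scalar, $D=2$, $m=1$, and $A$ itself is lower triangular --- again $b=0$. (Your phrase ``forces $m\mid 2$'' should read ``forces $D=2$, hence $m=1$''; that is the only slip.) The transpose reduction for the second rows is also sound, since $(c_n,d_n)$ and $(c_k,d_k)$ are dependent iff $c_{n-k}=0$ iff $(A^{\top})^{n-k}$ has vanishing $(1,2)$-entry, and $A^{\top}$ satisfies the same hypotheses with off-diagonal product $cb\neq 0$. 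By contrast, the paper conjugates $A$ to diagonal (or Jordan) form, writes the entries of $A^n$ explicitly in terms of $\alpha^{\pm n}$ and the entries of the conjugating matrix $M$, and derives $\alpha^{2(n-k)}=1$, which is then contradicted by a parity analysis of $\ord(\alpha)$ versus $D$. Your argument is coordinate-free and shorter; the paper's computation has the side benefit of producing the explicit formulas for $a_n,b_n,c_n,d_n$ (and the quantities $t,u,v,w$) that are reused in the remark following Lemma \ref{lemma:LI-2}.
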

\begin{proof}
Let us suppose that $A$ is a diagonalizable matrix and denote by  $\alpha,\alpha^{-1}$ the two eigenvalues of $A$. 
Since $A$ is a diagonalizable matrix,  we can write
\[
A= M \left( \begin{array}{cc}\alpha&0\\ 0&\alpha^{-1}\end{array}\right) M^{-1},
\quad\text{
where}
\quad
M = \left( \begin{array}{cc}t&u\\ v&w\end{array}\right)
\]
is an invertible matrix . The assumption $bc\neq 0$ implies $tuvw\neq 0$. 

By direct calculation, we have that
\[
A^n=\left( \begin{array}{cc}\delta(tw\alpha^n-uv\alpha^{-n}) &\delta ut(\alpha^{-n}-\alpha^n)\\
                \delta vw(\alpha^n-\alpha^{-n})& \delta(wt\alpha^{-n}-uv\alpha^{n})\end{array}\right),\ \ n\in\N.
\]
where $\delta:=(tw-uv)^{-1}=(\det(M))^{-1}$.
Let us suppose that $(a_n,b_n)=\gamma(a_k,b_k)$ for some $0\leq k<n<D$ and some $\gamma\in \fff{q}$, then
\begin{eqnarray*}
  tw\alpha^n-uv\alpha^{-n} &=& \gamma(tw\alpha^k-uv\alpha^{-k}) \\
  ut(\alpha^{-n}-\alpha^n) &=& \gamma ut(\alpha^{-k}-\alpha^k),
\end{eqnarray*}
which implies
\begin{eqnarray*}
tw (\alpha^n - \gamma \alpha^k) &=& uv (\alpha^{-n}-\gamma \alpha^{-k}) \\
\alpha^n - \gamma \alpha^k &=& \alpha^{-n}-\gamma \alpha^{-k}.
\end{eqnarray*}
If $\alpha^n\neq \gamma \alpha^k$, we obtain $tw=uv$, a contradiction since $M$ is invertible. Therefore 
$\alpha^n=\gamma\alpha^k$ and $\alpha^{-n}=\gamma \alpha^{-k}$, hence $\alpha^{2(n-k)}=1$, i.e., $\ord(\alpha)$ divides $2(n-k)$. If $\ord(\alpha)$ is even, then $2D=\ord(\alpha)$ and $0<2(n-k)<2D$.
If $\ord(\alpha)$ is odd, then $\ord(\alpha)$ divides $(n-k)$, $D=\ord(\alpha)$ and $0<n-k<D$. Both cases lead us to a contradiction. The proof of the linear independence of $(c_n,d_n)$ and $(c_k,d_k)$ follows similarly.

When $A$ is non diagonalizable matrix, then 
$$A=M^{-1}\begin{pmatrix} 1&0\\ 1&1\end{pmatrix}M,\quad\text{ where }\quad M=\left( \begin{array}{cc}t&u\\ v&w\end{array}\right)$$ and 
\[
A^n=\left( \begin{array}{cc}1-n\delta tu &-n\delta u^2\\
                n\delta t^2& 1+n\delta tu\end{array}\right),\ \ n\in\N.
\]
By the same process of the diagonalizable case, we conclude the proof.
\end{proof}

\begin{lemma}\label{lemma:LI-3} Let  $A=\left( \begin{array}{cc}a&0\\ c&d\end{array}\right)\in \GL_2(\fff{q})$ with   $c\ne 0$ and $(c_n, d_n)$ as in the previous lemma. Then for any $0\leq k<n<D$, the vectors $(c_n,d_n),(c_k,d_k)$ are linearly independent over $\fff{q}$.
\end{lemma}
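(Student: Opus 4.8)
The plan is to reduce to an explicit computation of the powers of the triangular matrix $A=\left(\begin{array}{cc}a&0\\ c&d\end{array}\right)$, exactly as in the proof of Lemma \ref{lemma:LI-1}, but now keeping track of the fact that $b=0$ changes the shape of $A^n$. First I would split into two cases according to whether $a=d$ or $a\ne d$. When $a\ne d$, the matrix is diagonalizable and a direct induction gives
\[
A^n=\begin{pmatrix} a^n & 0\\ c\cdot\dfrac{a^n-d^n}{a-d} & d^n\end{pmatrix},
\]
so the second row is $(c_n,d_n)=\left(c\cdot\frac{a^n-d^n}{a-d},\,d^n\right)$. When $a=d$, one gets $A^n=\begin{pmatrix} a^n&0\\ n c a^{n-1}& a^n\end{pmatrix}$, so $(c_n,d_n)=(nca^{n-1},a^n)$.

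Next I would suppose, aiming for a contradiction, that $(c_n,d_n)=\gamma(c_k,d_k)$ for some $\gamma\in\fff{q}$ and $0\le k<n<D$; note $\gamma\ne 0$ since $c\ne 0$, $a\ne 0$, $d\ne 0$ force all entries nonzero. In the case $a=d$, comparing second coordinates gives $a^n=\gamma a^k$, hence $\gamma=a^{n-k}$, and then the first coordinates give $nca^{n-1}=a^{n-k}\cdot kca^{k-1}$, i.e. $n=k$, a contradiction. In the case $a\ne d$, comparing second coordinates gives $d^n=\gamma d^k$, so $\gamma=d^{n-k}$; substituting into the first-coordinate equation and cancelling $c/(a-d)$ yields $a^n-d^n=d^{n-k}(a^k-d^k)$, i.e. $a^n=a^k d^{n-k}$, hence $(a/d)^{n-k}=1$. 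This means $\ord(a/d)$ divides $n-k$. The remaining task is to connect $\ord(a/d)$ with $D=\ord([A])$ and derive the contradiction from $0<n-k<D$.

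The main obstacle is precisely this last step: relating $\ord(a/d)$ to $D$ for a triangular, hence upper-triangular-conjugate, matrix. For $A=\left(\begin{array}{cc}a&0\\ c&d\end{array}\right)$ with $c\ne 0$, the class $[A]$ in $\PGL_2(\fff{q})$ is conjugate (after scaling) to the class of $\left(\begin{array}{cc}a/d&0\\ *&1\end{array}\right)$, and one checks that $[A]^m=[I]$ forces both $(a/d)^m=1$ and the off-diagonal relation to hold; when $a\ne d$ the off-diagonal entry of the $m$-th power is a nonzero multiple of $\big((a/d)^m-1\big)$, so $[A]^m=[I]$ is equivalent to $(a/d)^m=1$, giving $D=\ord(a/d)$ and thus $0<n-k<D$ is the desired contradiction. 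One should also handle the degenerate possibility that after scaling $A$ is already a scalar matrix: since $c\ne 0$ this cannot happen, so $D\ge 2$ and the argument is not vacuous. I would phrase the write-up to mirror Lemma \ref{lemma:LI-1} as closely as possible, pointing out that the case $a=d$ plays the role of the non-diagonalizable case there, and noting that the vanished top row $(a_n,b_n)=(a^n,0)$ is why the conclusion about the first row fails and only the second-row statement is asserted.
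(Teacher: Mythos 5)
Your proposal is correct and follows essentially the same route as the paper: compute $A^n$ explicitly in the two cases $a\ne d$ and $a=d$, derive $(a/d)^{n-k}=1$ (resp.\ $n\equiv k$) from the proportionality assumption, and contradict $0<n-k<D$; the paper phrases the final step as ``$A^{n-k}=a^{n-k}I$, impossible since $0<n-k<D$,'' which is the same observation as your identification $D=\ord(a/d)$. The only point to make explicit in the write-up is that in the case $a=d$ the equation $n\,c\,a^{n-1}=a^{n-k}\cdot k\,c\,a^{k-1}$ gives $n\equiv k \pmod p$ rather than $n=k$ as integers, so one must invoke $D=p$ (which holds since $c\ne0$) to conclude.
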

 
\begin{proof} By a direct calculation, we have that
$$A^n=
\begin{pmatrix} a^n &0\\ c\frac {a^n-d^n}{a-d}& d^n\end{pmatrix}\quad
\text{if $a\ne d$}
$$ 
and
$$
A^n= \begin{pmatrix} a^n &0\\ nca^{n-1}& a^n\end{pmatrix}\quad
\text{if $a=d$.}
$$
Let us suppose that $(c_n,d_n)=\gamma(c_k,d_k)$ for some $0\leq k<n<D$ and some $\gamma\in \fff{q}$, in the case $a\ne d$, it follows that $\gamma=d^{n-k}$ and
$$ c\frac {a^n-d^n}{a-d}= cd^{n-k}\frac {a^k-d^k}{a-d}.$$
Since $c\ne 0$, we obtain that $a^{n-k}=d^{n-k}$ and therefore $A^{n-k}=a^{n-k} I$, which is impossible since $0<n-k<D$.  The second case is similar. 
\end{proof}
\begin{remark} When $A=\left( \begin{array}{cc}a&0\\ c&d\end{array}\right)\in \GL_2(\fff{q})$ is a triangular matrix, $[A]\ne [I]$, then 
$$\ord([A])=\begin{cases} \ord(\frac ad)&\text{if $a\ne d$}\\
p&\text{if $a= d$ and $c\ne 0$.}
\end{cases}$$
\end{remark}

\begin{lemma}\label{lemma:LI-2}
Let $A=\left( \begin{array}{cc}a&b\\ c&d\end{array}\right)\in \GL_2(\fff{q})$ %, $D=\ord([A])$ 
and denote by $(a_n,b_n)$ and $(c_n,d_n)$ the first and second
row, respectively, of $A^n$, $n\in\N$. Assume that $(c_n,d_n) = \gamma (a_k,b_k)$ for some $0\leq k,n<D$
and $\gamma\in\fff{q}$. Then, denoting $g=n-k$, we have
\[
  (c_{i},d_{i}) = \epsilon_i \gamma (a_{i-g},b_{i-g}), \ \ 0\leq i\leq D-1,
\]
where $\epsilon_i\in \{-1,1\}$ and the indexes  are computed modulo $D$.
\end{lemma}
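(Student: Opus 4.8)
The plan is to exploit the multiplicative structure of the sequence of rows of the powers $A^n$ together with the hypothesis relating a bottom row to a top row. First I would observe that, since $A\in\GL_2(\fff q)$, for every $i$ and $m$ we have the row identities coming from $A^{i}=A^{i-k}A^{k}$ and $A^{i}=A^{i-n}A^{n}$; concretely, writing $A^{m}=\left(\begin{smallmatrix}a_m&b_m\\ c_m&d_m\end{smallmatrix}\right)$, the top row of $A^{i}$ equals the top row of $A^{i-k}$ multiplied on the right by $A^{k}$, and the bottom row of $A^{i}$ equals the bottom row of $A^{i-n}$ multiplied on the right by $A^{n}$. Since $\det A\ne 0$, right multiplication by $A^{k}$ (resp.\ $A^{n}$) is an invertible linear map on row vectors, so from the single relation $(c_n,d_n)=\gamma(a_k,b_k)$ one should be able to transport the proportionality to all indices.

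Next I would make this precise. From $(c_n,d_n)=\gamma(a_k,b_k)$, multiply both sides on the right by $A^{j}$: the left side becomes $(c_{n+j},d_{n+j})$ and the right side becomes $\gamma(a_{k+j},b_{k+j})$, so $(c_{n+j},d_{n+j})=\gamma(a_{k+j},b_{k+j})$ for all $j\ge 0$. Reducing indices modulo $D$ — which is legitimate because $A^{D}=\pm I$, so passing from $m$ to $m-D$ multiplies a row by $\pm 1$ — gives $(c_i,d_i)=\pm\gamma(a_{i-g},b_{i-g})$ for every $0\le i\le D-1$ with $g=n-k$, and the sign $\epsilon_i$ records exactly how many times we wrapped around (i.e.\ whether $i-g$ was reduced mod $D$), which is well defined since $[A]$ has order $D$. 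This yields the claimed formula.

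The one genuinely delicate point is the bookkeeping of the signs $\epsilon_i$, because $A^{D}$ need not equal $I$ but only a scalar matrix $\lambda I$ with $\lambda=\pm1$ (when $\det A=1$ and $A$ is diagonalizable, as recalled in the Preliminaries, $A^{D}=(-1)^{D+1}I$; in general $\lambda$ is the common value forced by $[A]^{D}=[I]$). So when I reduce $k+j$ or $n+j$ modulo $D$ I pick up a factor $\lambda^{\lfloor (k+j)/D\rfloor}$ on one side and $\lambda^{\lfloor (n+j)/D\rfloor}$ on the other; the ratio of these two powers of $\lambda$ is $\epsilon_i\in\{-1,1\}$, and one must check it depends only on $i$ (and on $g$), not on the representative $j$. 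I expect this to be the main obstacle, though it is only a careful indexing argument: the identity $\lfloor(k+j+D)/D\rfloor=\lfloor(k+j)/D\rfloor+1$ makes the dependence on $j$ cancel in the ratio once $i=k+j\bmod D$ is fixed. Everything else is a direct consequence of the invertibility of right multiplication by powers of $A$, so no case analysis on diagonalizability is needed here.
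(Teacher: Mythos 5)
Your proposal is correct and follows essentially the same route as the paper: both propagate the single relation $(c_n,d_n)=\gamma(a_k,b_k)$ by right multiplication with powers of $A$ (the paper first cancels $A^k$ to get $(0,1)A^{g}=\gamma(1,0)$, but this is the same idea), and both handle the wraparound of indices modulo $D$ via the fact that $A^{D}$ is a scalar matrix, which is where the signs $\epsilon_i$ come from. Your extra care about the scalar $\lambda$ with $A^{D}=\lambda I$ and the independence of the sign from the representative $j$ is a slight refinement of the paper's computation (which assumes $\det A=1$ so that $\lambda=(-1)^{D+1}$), not a different argument.
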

\begin{proof}
%We start by noting that 
%\[
%\ord(A) = \left\{\begin{array}{ll}
% 2D & \mbox{, if }\ D\equiv 0\pmod{2} \\
% D  & \mbox{, if }\ D\equiv 1\pmod{2} 
%                  \end{array}\right.
%\]
%so that $A^D= (-1)^{D-1} I$. 
By definition, $(a_k, b_k)=(1,0)A^k$ and $(c_n,d_n)=(0,1) A^n$,
hence $(0,1) A^{g}=\gamma (1,0)$, where $g=n-k$. Therefore $(0,1) A^{g+i}=\gamma (1,0)A^i$, that is, 
\begin{equation}\label{eq:aux-1}
(c_{g+i},d_{g+i}) = \gamma (a_i,b_i), \ \ \forall i\geq 0.
\end{equation}
Assume $k<n$. From this it follows that
\begin{eqnarray*}
  (c_{g+i},d_{g+i}) &=& \gamma (a_i,b_i), \ \ i=0\ldots, D-g-1, \\
  (c_{D+i},d_{D+i}) &=& \gamma (a_{D-g+i}, b_{D-g+i}), \ \ i=0,\ldots, g-1,
\end{eqnarray*}
where the second identity follows by changing $D-g+i$ for $i$ in Eq.~\eqref{eq:aux-1}.
Now, since $A^D=(-1)^{D+1}I$ we have that $(c_{D+i},d_{D+i})=(0,1)A^{D+i} = (-1)^{D+1} (c_i,d_i)$, so we have
\begin{eqnarray*}
  (c_{i},d_{i}) &=& \gamma (-1)^{D-1} (a_{D-g+i}, b_{D-g+i}), \ \ i=0,\ldots,g-1, \\
  (c_{i},d_{i}) &=& \gamma (a_{i-g},b_{i-g}), \ \ i=g,\ldots, D-1. \\
\end{eqnarray*}
If $k>n$ the computation is entirely similar and the case $k=n$ is not possible since $(a_k, b_k)$ and $(c_k, d_k)$ are linearly independent.
\end{proof}

\begin{remark}
If $\rho$ is the smallest prime factor of $D$ and $g$ is defined as in Lemma \ref{lemma:LI-2}, it is clear that $$(g, D)\le D/\rho$$ and this bound is sharp: for instance, suppose that $q$ is not a power of $\rho$, let $\beta\in \ff{q}$ be a $2\rho n$-th primitive root of the unity and $\alpha=\beta^n$ . Consider $M=\begin{pmatrix} 1 &1\\ \alpha& \alpha^{-1}\end{pmatrix}$ and 
$$A=M^{-1} \begin{pmatrix} \beta &0\\ 0&\beta^{-1}\end{pmatrix}M.$$
Observe that $\ord([A])=\rho n$ and if $g$ is the minimum  positive integer such that
$$\beta^{2g}=\frac {uv}{tw}= \frac{\alpha}{\alpha^{-1}}=\beta^{2n},$$
then $g=n=\frac D \rho$, where $t, u, v$ and $w$ are defined as in Lemma \ref{lemma:LI-1}. In the proof of our main result we use the general bound $(g, D)\le \lfloor \frac{D}{2}\rfloor$.
\end{remark}

\section{Bounds for the order of $\langle \theta\rangle\subset {\fff{q}}^*$}

Before the proof of our main result, as in \cite{MartinezReis2016}, we need the following definition:
\begin{define}\label{I_stm}
For each $s,t,m\in\N$,  $m<D$, define the set
\[
I_{s,t,m}:=\left\{(u_0,\ldots,u_{D-1})\in \Z^D\ \left|\ {\sum\limits_{u_j>0}u_j\leq s, \sum\limits_{u_j<0}|u_j|\leq t \quad \text{and}
\atop \text{the first $m$ coordinates are zero}}\right.\right\} 
\]
%and 
%\[
%I_{s,t,g}:=\left\{(u_0,\ldots,u_{D-1})\in \Z^D\ :\ u_0=\cdots=u_{(g,D)-1}=0, \sum_{u_j>0}u_j\leq s, \sum_{u_j<0}|u_j|\leq t\right\} .
%\]
\end{define}
%The number of elements of the  set $I_{s,t,m}$ can be estimated:
 \begin{lemma}\label{lemma_I_stm} Let $I_{s,t,m}$ be as in the Definition \ref{I_stm}. Then
$$|I_{s, t, m}|=\sum_{i=0}^{D-m}\binom{D-m}{i}\binom{s}{i}\binom{D-m-i+t}{t}.$$
In particular, for $t\ge \frac{D-m}2$
$$|I_{t, t, m}|> 
\binom{  \frac {D-m}2 +t}{D-m} \binom{2D-2m}{D-m}.$$
%\frac 12\left[\binom{\left \lfloor \frac {D-m}2\right\rfloor +t}{D-m}+\binom{\left\lceil \frac {D-m}2\right\rceil +t}{D-m}\right] \binom{2D-2m}{D-m}.$$
\end{lemma}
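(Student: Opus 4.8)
Write $N := D-m$ throughout this sketch. The plan is to prove the closed form by a direct lattice‑point count, and then to obtain the lower bound by rewriting the sum so that Vandermonde's identity and a convexity argument apply.

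\smallskip
\noindent\emph{The identity.} Since the first $m$ coordinates of every element of $I_{s,t,m}$ are forced to vanish, $I_{s,t,m}$ is in bijection with the set of $v=(v_1,\dots,v_N)\in\Z^{N}$ with $\sum_{v_j>0}v_j\le s$ and $\sum_{v_j<0}|v_j|\le t$. I would stratify this set by the support pattern of $v$: choose the set $P$ of coordinates on which $v$ is strictly positive, with $|P|=i$ (there are $\binom{N}{i}$ choices). The number of ways to put strictly positive values on $P$ with sum at most $s$ is the number of solutions of $x_1+\dots+x_i\le s$ with $x_j\ge 1$, which is $\binom{s}{i}$ by the usual stars‑and‑bars count (subtract $1$ from each $x_j$ and add a slack variable). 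Independently, the number of ways to put non‑positive values on the remaining $N-i$ coordinates with absolute values summing to at most $t$ is the number of solutions of $w_1+\dots+w_{N-i}\le t$ with $w_j\ge 0$, namely $\binom{N-i+t}{t}$. Summing over $i$ gives the stated formula. This step is routine.

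\smallskip
\noindent\emph{Reduction of the lower bound.} Now set $s=t$. Using $\binom{N-i+t}{t}=\binom{N-i+t}{N-i}$ and the elementary identity
\[
\binom{t}{i}\binom{N+t-i}{N-i}=\frac{(N+t-i)!}{i!\,(t-i)!\,(N-i)!}=\binom{N}{i}\binom{N+t-i}{N},
\]
the sum collapses to $|I_{t,t,m}|=\sum_{i=0}^{N}\binom{N}{i}^{2}\binom{N+t-i}{N}$. Since $\binom{2N}{N}=\sum_{i=0}^{N}\binom{N}{i}^{2}$ by Vandermonde, the inequality to be proved is equivalent to
\[
\sum_{i=0}^{N}\binom{N}{i}^{2}\left(\binom{N+t-i}{N}-\binom{N/2+t}{N}\right)>0.
\]
Because $\binom{N}{i}^{2}=\binom{N}{N-i}^{2}$, I would group the terms into pairs $\{i,N-i\}$ (the index $i=N/2$, when $N$ is even, forming a singleton whose contribution is $0$). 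The contribution of the pair $\{i,N-i\}$ with $i<N/2$ equals $\binom{N}{i}^{2}\bigl(\binom{N+t-i}{N}+\binom{t+i}{N}-2\binom{N/2+t}{N}\bigr)$, and the two upper arguments $N+t-i$ and $t+i$ have arithmetic mean exactly $N/2+t$. Hence each such contribution is nonnegative by convexity of $x\mapsto\binom{x}{N}$, and it remains only to show that the $i=0$ pair is strictly positive.

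\smallskip
\noindent\emph{The convexity step, which is the delicate point.} The function $x\mapsto\binom{x}{N}$ (read via the polynomial, i.e.\ Gamma, extension when $N$ is odd, so that $\binom{N/2+t}{N}$ makes sense) is strictly convex and increasing on $x\ge N-1$, but it vanishes at integer arguments $0\le x<N$; this forces a short case analysis. If $t\ge N$, all arguments $t$, $N/2+t$, $N+t-i$ lie in $[N-1,\infty)$, ordinary convexity gives the nonnegativity of every paired contribution, and the $i=0$ pair satisfies $\binom{N+t}{N}+\binom{t}{N}>2\binom{N/2+t}{N}$ strictly. If $N/2\le t<N$, then $\binom{t+i}{N}=0$ whenever $t+i<N$; in that regime $t+i\le N-1$ together with $t\ge N/2$ forces $i$ to be small, so $N+t-i$ is large, and a direct ratio estimate $\binom{N+t-i}{N}\ge 2\binom{N/2+t}{N}$ (with strict inequality at $i=0$, where one factor of the ratio $\binom{N+t}{N}/\binom{N/2+t}{N}$ is already $\ge 2$) finishes the argument. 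This is also exactly where the hypothesis $t\ge\frac{D-m}{2}$ is essential: it is what makes $\binom{\frac{D-m}{2}+t}{D-m}$ a genuine binomial coefficient and keeps $N/2+t$ in the range where $\binom{x}{N}$ is increasing and convex. (For $N=1$ the inequality is an equality rather than strict, so one tacitly assumes $D-m\ge 2$, which is the case in the intended application.)
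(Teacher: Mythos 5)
Your argument is correct and essentially identical to the paper's: the same stratification by the support of the positive and negative coordinates yields the closed form, and the same rewriting as $\sum_{i}\binom{N}{i}^{2}\binom{N+t-i}{N}$ followed by pairing $i$ with $N-i$, invoking convexity of $x\mapsto\binom{x}{N}$, and factoring out $\sum_i\binom{N}{i}^2=\binom{2N}{N}$ yields the lower bound. Your extra care about where $\binom{x}{N}$ is genuinely convex, and your observation that the strict inequality degenerates to an equality when $D-m=1$, address points the paper silently glosses over but do not change the route.
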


\begin{proof}
Let us denote $R=D-m$. Notice that, for each $0\le i\le R$ and $0\le j\le R-i$ there are  $\binom{R}{i} \binom{R-i}{j}$ different ways to select $j$ coordinates of $u_m, \dots, u_{D-1}$ to be negative and $i$ coordinates to be positive. In addition, the number of positive  solutions of $x_1+x_2+\cdots+x_i\le s$ is  $\binom{s}{i}$ and the number of positive  solutions of $x_1+x_2+\cdots+x_j\le t$ is  $\binom{t}{j}$. Thus, for each pair  $i, j$, there exist $\binom{R}{i} \binom{R-i}{j} \binom{s}{i}\binom{t}{j}$ elements of $I_{s, t, m}$. Summing over all $i$ and $j$, we obtain 
\begin{equation} \label{eq:C-est}
|I_{s, t, m}|=\sum_{i=0}^{R}\binom{R}{i}\binom{s}{i}\sum_{j=0}^{R-i}\binom{R-i}{j}\binom{t}{j}=\sum_{i=0}^{R}\binom{R}{i}\binom{s}{i}\binom{R-i+t}{t}.
\end{equation}
An easy calculation gives $\binom{s}{i}\binom{R+t-i}{t}=\binom{R}{i}\binom{R-i+t}{R}\frac{\binom{s}{i}}{\binom{t}{i}}$. In particular, if $s=t$ we get

\begin{eqnarray*}|I_{t, t, m}|&=&\sum_{i=0}^R\binom{R}{i}^2\binom{R-i+t}{R}=
\frac 12 \sum_{i=0}^R\binom{R}{i}^2\left[\binom{R-i+t}{R}+\binom{i+t}{R}\right]\\
&\ge&\frac 12\left[\binom{\left \lfloor \frac R2\right\rfloor +t}{R}+\binom{\left\lceil \frac R2\right\rceil +t}{R}\right]  \sum_{i=0}^R\binom{R}{i}^2\\
&= & \frac 12\left[\binom{\left \lfloor \frac R2\right\rfloor +t}{R}+\binom{\left\lceil \frac R2\right\rceil +t}{R}\right] \binom{2R}{R}\\
&\ge& \binom{  \frac {R}2 +t}{R} \binom{2R}{R},
\end{eqnarray*}
where the last inequality follows from the fact that  $\Gamma_N(x):=\binom{x}{N}$ is a convex function for all $x\ge N$.
\end{proof}

\begin{propo}  For every $D\ge 2$ and $r\ge 3$ the following inequalities  hold
\begin{enumerate}[a)]
\item $\displaystyle|I_{\left \lfloor\frac{Dr}2\right\rfloor,\left \lfloor\frac{Dr}2\right\rfloor, 0}|>
\frac1{\sqrt{2}\pi D} \sqrt{\frac {r-1}{r+1}}\cdot\left(\frac { 4(r+1)^{r+1}}{(r-1)^{r-1}}\right)^{\frac D2}  \exp\left(-\frac 1{12D}\cdot\frac {5r^2+3}{r^2-1}\right)$.
\item $\displaystyle|I_{\left \lfloor\frac{Dr}4\right\rfloor,\left \lfloor\frac{Dr}4\right\rfloor, 0}|> 
\frac1{\sqrt{2}\pi D} \sqrt{\frac {r-2}{r+2}}\cdot\left(\frac { (r+2)^{r+2}}{(r-2)^{r-2}}\right)^{\frac D4}  \exp\left(-\frac 5{24D}\cdot\frac {r^2+4}{r^2-4}\right)$.
\item $\displaystyle|I_{\left \lfloor\frac{Dr}4\right\rfloor,\left \lfloor\frac{Dr}4\right\rfloor,\left \lfloor\frac D2\right\rfloor}|>
\frac{\sqrt{2}}{\pi D} \sqrt{\frac {r-1}{r+1}}\cdot\left(\frac { 4(r+1)^{r+1}}{(r-1)^{r-1}}\right)^{\frac D4}  \exp\left(-\frac 1{24D}\cdot\frac {5r^2+3}{r^2-1}\right)$.
\end{enumerate}

\end{propo}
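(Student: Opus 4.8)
The plan is to read off all three inequalities from Lemma~\ref{lemma_I_stm}, which already reduces the problem to estimating a product of two (generalized) binomial coefficients, and then to carry out an explicit Stirling computation. Put $R=D-m$ and apply Lemma~\ref{lemma_I_stm} with $m=0$, $R=D$, $t=\fl{Dr/2}$ in case (a); $m=0$, $R=D$, $t=\fl{Dr/4}$ in case (b); and $m=\fl{D/2}$, $R=\ceil{D/2}$, $t=\fl{Dr/4}$ in case (c). In each case one first checks that the hypothesis $t\ge R/2$ of Lemma~\ref{lemma_I_stm} holds for every $D\ge2$, $r\ge3$, the tightest instances being $D=2$, $r=3$ in (b) and (c). Lemma~\ref{lemma_I_stm} then gives
$$|I_{t,t,m}|>\binom{\tfrac R2+t}{R}\binom{2R}{R},$$
where $\binom{\tfrac R2+t}{R}$ is the generalized binomial coefficient (a polynomial in its top argument, which exceeds $R$), so it suffices to bound this product from below.

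The central coefficient is handled by $\binom{2R}{R}=\frac{4^R}{\sqrt{\pi R}}\,e^{\theta_{2R}-2\theta_R}$, where $n!=\sqrt{2\pi n}\,(n/e)^n e^{\theta_n}$ with $\tfrac1{12n+1}<\theta_n<\tfrac1{12n}$ (Robbins). For the other one write $\binom{\tfrac R2+t}{R}=\dfrac{\Gamma(\tfrac R2+t+1)}{\Gamma(R+1)\,\Gamma(\tfrac R2+t-R+1)}$ and use the Gamma–function form $\Gamma(x+1)=\sqrt{2\pi x}\,(x/e)^x e^{\mu(x)}$ with $\tfrac1{12x+1}<\mu(x)<\tfrac1{12x}$ for real $x>0$ (the Gamma version is needed because the floors can make $\tfrac R2+t$ and $\tfrac R2+t-R$ non–integral). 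Ignoring the floors for a moment — i.e.\ taking $t=Dr/2$ or $Dr/4$ and $m=D/2$ — the three Gamma–arguments are $\sim D(r+c)/k$, $\sim D$, $\sim D(r-c)/k$, with $(c,k)=(1,2),(2,4),(1,4)$ in the three cases. Substituting the expansions, all powers of $D$ and of $e$ cancel identically; the surviving powers of $2$ and $4$ combine with $(r\pm c)^{\,r\pm c}$ to give $\bigl(\tfrac{4(r+1)^{r+1}}{(r-1)^{r-1}}\bigr)^{D/2}$, $\bigl(\tfrac{(r+2)^{r+2}}{(r-2)^{r-2}}\bigr)^{D/4}$, $\bigl(\tfrac{4(r+1)^{r+1}}{(r-1)^{r-1}}\bigr)^{D/4}$; the $\sqrt{2\pi\,\cdot}$ factors assemble into $\tfrac1{\sqrt2\pi D}$ in (a),(b) and $\tfrac{\sqrt2}{\pi D}$ in (c); and the ratio of the square roots of the two large arguments leaves a factor $\sqrt{\tfrac{r+c}{r-c}}$. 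What is left over is an accumulated exponential $e^{E}$, with $E$ a finite signed sum of values of $\theta$ and $\mu$ at the arguments above.

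It remains to put the floors back and to bound $E$ below. Reinstating the floors shifts each of the two large Gamma–arguments down by the same amount $\delta\in[0,1)$ (the fractional part of $Dr/k$); this multiplies the first binomial by a factor which, up to a $\bigl(1+O(1/D)\bigr)$ error coming from the sub–leading Stirling terms, equals $\bigl(\tfrac{r-c}{r+c}\bigr)^{\delta}$, the $\delta$-th power of the ratio of the two arguments. Combined with the $\sqrt{\tfrac{r+c}{r-c}}$ already present this gives $\bigl(\tfrac{r+c}{r-c}\bigr)^{1/2-\delta}\ge\bigl(\tfrac{r-c}{r+c}\bigr)^{1/2}$ for every $\delta<1$, so the coefficient $\sqrt{\tfrac{r-c}{r+c}}$ of the statement is precisely what survives, the non–extremal $\delta$ only helping. (In (c) the floor in $m$ amounts to the parity of $D$ entering through $\ceil{D/2}$; the stated bound is the one valid for both parities.) Finally one bounds $E$ — together with the $O(1/D)$ corrections just absorbed — from below using the two–sided estimates above; the lower bounds $\theta_n>\tfrac1{12n+1}$, $\mu(x)>\tfrac1{12x+1}$ are essential here, positivity alone being too weak. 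Each argument occurring with a negative sign is $\ge D(r-c)/k$ (or $\ge D$), and collecting all these Binet–type contributions and the half–integer corrections $\tfrac{\Gamma(x+1/2)}{\Gamma(x+1)}=x^{-1/2}\bigl(1-\tfrac1{8x}+O(x^{-2})\bigr)$ produces exactly the exponents $-\tfrac1{12D}\cdot\tfrac{5r^2+3}{r^2-1}$ in (a),(c) and $-\tfrac5{24D}\cdot\tfrac{r^2+4}{r^2-4}$ in (b).

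The genuinely delicate step is this final bookkeeping: the inequalities must hold uniformly for all $D\ge2$ and $r\ge3$, so the exponential correction cannot be dropped, and one has to keep the Robbins/Binet terms, the half–integer corrections from the floors, and (in (c)) the parity of $D$ sharp enough that the product stays above the stated bound even at the smallest values of $D$ and $r$, where the available slack is least. Everything else — the algebra turning the three Gamma quotients into the displayed powers — is routine.
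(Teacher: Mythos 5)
Your overall strategy is the one the paper itself uses: reduce to the product $\binom{R/2+t}{R}\binom{2R}{R}$ via Lemma~\ref{lemma_I_stm} (with $R=D$, $t=\fl{Dr/2}$ or $\fl{Dr/4}$ in (a),(b), and $R=\ceil{D/2}$, $t=\fl{Dr/4}$ in (c)), then apply Stirling-type lower bounds to each factor. The leading-order algebra you describe --- cancellation of the powers of $D$ and $e$, assembly of the powers of $4$ and of $(r\pm c)^{r\pm c}$ into the stated $D/2$- or $D/4$-th powers, and the prefactors $\frac{1}{\sqrt2\,\pi D}$ and $\sqrt{\tfrac{r-c}{r+c}}$ --- is correct and matches the paper's computation.

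There is, however, a genuine gap: the step you yourself call ``the genuinely delicate step'' --- verifying that the accumulated Binet/Robbins corrections, together with the errors introduced by the floors, stay above exactly $\exp\bigl(-\tfrac{5}{24D}\cdot\tfrac{r^2+4}{r^2-4}\bigr)$ (resp.\ the other two exponentials) uniformly for all $D\ge2$, $r\ge3$ --- is asserted rather than carried out, and since the whole content of the proposition is an explicit inequality with explicit constants, that bookkeeping \emph{is} the proof. Two concrete issues. First, your floor treatment claims the first binomial changes by $\bigl(\tfrac{r-c}{r+c}\bigr)^{\delta}$ ``up to a $(1+O(1/D))$ error'' which is later ``absorbed''; an unquantified multiplicative $O(1/D)$ error is of the same order as the exponential correction you are trying to establish, so nothing is proved until it is made explicit. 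The paper sidesteps this entirely: $\binom{x}{D}$ is increasing for $x\ge D-1$, so $\binom{\frac D2+\fl{Dr/4}}{D}\ge\binom{\frac D2+\frac{Dr}4-1}{D}$, and the exact identity $\binom{x-1}{D}=\frac{x-D}{x}\binom{x}{D}$ with $x=D\cdot\frac{r+2}{4}$ yields the factor $\frac{r-2}{r+2}$ with no error term (which, against the $\sqrt{\tfrac{r+2}{r-2}}$ from Stirling, gives the stated $\sqrt{\tfrac{r-2}{r+2}}$). Second, instead of summing many separate $\theta$- and $\mu$-values, the paper invokes a single closed-form two-sided inequality (Sasv\'ari's Corollary~1) for $\binom{an}{n}$, whose error exponent $-\tfrac{1}{12D}\bigl(1+\tfrac{16}{r^2-4}\bigr)$ combines with the $-\tfrac{1}{8D}$ from $\binom{2D}{D}>\tfrac{4^D}{\sqrt{\pi D}}e^{-1/8D}$ to give $-\tfrac{5r^2+12}{24D(r^2-4)}>-\tfrac{5}{24D}\cdot\tfrac{r^2+4}{r^2-4}$, a one-line check. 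To complete your argument you would need to write down the analogous explicit sum of correction terms and verify the corresponding numerical inequality in each of the three cases.
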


\begin{proof} %This result  follows from  Lemma \ref{lemma_I_stm} and  the Stirling approximation applied to  binomials (see Corollary 1  in \cite{Sas}). 
The steps  of the proof are essentially  the same as those used in the proof 
of  \cite[Theorem 2.3]{MartinezReis2016}. %page 32
In fact,
$$\binom{\frac D2+\frac{Dr}4-1}{D}= \frac{\frac D2+\frac {Dr}4-D}{\frac 
D2+\frac{Dr}4} \cdot\binom{D\cdot \frac{r+2}4}{D}=\frac 
{r-2}{r+2}\cdot\binom{D\cdot \frac{r+2}4}{D}.$$
From Corollary 1 in \cite{Sas} 
\begin{align*}
\binom{\frac D2+\frac{Dr}4-1}{D}&\ge\frac {r-2}{r+2}\cdot\sqrt{\frac{\frac {r+2}4}{2\pi \frac {r-2}4}}\left(\frac {\left(\frac {r+2}4\right)^{\frac {r+2}4}}{\left(\frac {r-2}4\right)^{\frac {r-2}4}}\right)^D \frac 1{\sqrt D} \exp\left(-\frac 1{12D}\left(1+\frac{16}{r^2-4}\right)\right)\\
&=\frac1{\sqrt{2\pi D}} \sqrt{\frac {r-2}{r+2}}\cdot\left(\frac { (r+2))^{\frac {r+2}4}}{4(r-2)^{\frac {r-2}4}}\right)^D  \exp\left(-\frac {r^2+12}{12D(r^2-4)}\right).
\end{align*}
Finally,  from  Lemma \ref{lemma_I_stm} and inequality $\binom{2D}{D}> \frac {4^D}{\sqrt{\pi D}}\exp\left(-\frac 1{8D}\right)$, we conclude that
\begin{align*}
|I_{\lfloor \frac{Dr}4\rfloor,\lfloor \frac{Dr}4\rfloor,0}|&\ge \binom{\frac D2+\lfloor\frac{Dr}4\rfloor}{D}\cdot \binom{2D}{D}\ge \binom{\frac D2+\frac{Dr}4-1}{D}\cdot \binom{2D}{D}\\
%&>\frac1{\sqrt{2}\pi D} \sqrt{\frac {r-2}{r+2}}\cdot\left(\frac { (r+2))^{\frac {r+2}4}}{(r-2)^{\frac {r-2}4}}\right)^D  \exp\left(-\frac 5{24D}\cdot\frac {r^2+4}{r^2-4}\right)
&>\frac1{\sqrt{2}\pi D} \sqrt{\frac {r-2}{r+2}}\cdot\left(\frac { (r+2)^{r+2}}{(r-2)^{r-2}}\right)^{\frac D4}  \exp\left(-\frac 5{24D}\cdot\frac {r^2+4}{r^2-4}\right).
\end{align*}
By the same process we obtain items a) and c).
\end{proof}

%\begin{lemma}[\cite{Sas}, Corollary 1]\label{sasvari} For all $s> 0$ and  $u> 1$, we have
%$$c_u\cdot d_u^s\cdot \frac 1{\sqrt s}\cdot \Theta(u,s)< \binom{us} s <c_u\cdot d_u^s\cdot \frac 1{\sqrt s},$$
%where
%$$c_u=\sqrt{\frac u{2\pi (u-1)}}, \quad d_u=\frac{u^u}{(u-1)^{u-1}}$$
%and
%$$\Theta(u,s)=\exp\left(-\frac 1{12s}\left(1+\frac 1{u(u-1)}\right)\right).$$
%\end{lemma}

The main result of this paper is consequence of following theorem

\begin{theorem} \label{thm:diagonalizable}
 Let $A \in \GL_2(\ff{q})$, $[A]\ne [I]$  
and $\theta$  be a  generic  root of $F_{A,r}$. % such that $\dim_{\ff{q}}\ff{q}[\theta]=Dr$  and let $D=\ord([A])$. 
Then the map
$$
 \begin{array}{rccl}
  \Lambda : & I_{s,t,m} &\longrightarrow&\langle \theta\rangle\\
  &(u_0,\ldots,u_{D-1}) &\longmapsto& \displaystyle\prod_{j=0}^{D-1} \theta^{u_j q^{jr}}
\end{array}
$$
is one to one  in the following cases:
\begin{enumerate}[1) ]
\item $A$ is a triangular matrix , $m=0$  and $s+t<Dr$.
\item $A$ is not a triangular matrix, $(0,1)A^i$ and $(1,0)A^j$ are linearly independent for all $i,j$,   $m=0$  and $s+t<\frac{Dr}2$.
\item  $A$  is not a triangular matrix,  there exists  $0< g<D$ such that $(1,0)$ and $(0,1)A^g$ are linearly dependent, $m=\gcd(g,D)$  and $s+t<\frac{Dr}2$.
\end{enumerate}
\end{theorem}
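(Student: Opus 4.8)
The plan is to show that $\Lambda$ is injective by showing that two distinct elements of $I_{s,t,m}$ cannot have the same image; equivalently, that if $(u_0,\dots,u_{D-1})\in I_{s,t,m}$ and $\prod_{j=0}^{D-1}\theta^{u_j q^{jr}}=1$, then all $u_j=0$. Suppose $\Lambda(u)=\Lambda(v)$ with $u\ne v$; setting $w_j=u_j-v_j$ we get $\prod_j\theta^{w_jq^{jr}}=1$, where $\sum_{w_j>0}w_j\le s+t$ and $\sum_{w_j<0}|w_j|\le s+t$, and the first $m$ coordinates of $w$ vanish. Using the relation $\theta^{q^{jr}}=A^j\circ\theta=\frac{d_j\theta-c_j}{-b_j\theta+a_j}$ from Eq.~\eqref{eq:A-to-power}, each factor $\theta^{q^{jr}}$ is a M\"obius transformation of $\theta$ with coefficients from the rows of $A^j$, so the equation $\prod_j\theta^{w_jq^{jr}}=1$ becomes a polynomial identity in $\theta$: clearing denominators, $\prod_{w_j>0}(d_j\theta-c_j)^{w_j}\prod_{w_j<0}(-b_j\theta+a_j)^{|w_j|} = \prod_{w_j<0}(d_j\theta-c_j)^{|w_j|}\prod_{w_j>0}(-b_j\theta+a_j)^{w_j}$. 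Since $\theta$ is a generic root, $\dim_{\ff q}\ff q[\theta]=Dr$, so $\theta$ satisfies no nonzero $\ff q[X]$-polynomial of degree $<Dr$; but both sides have degree at most $s+t<Dr$ (respectively $<\frac{Dr}{2}$ in cases 2 and 3, where we will need a finer count), so the two polynomials must be \emph{identical} as elements of $\fff q[X]$.

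The heart of the argument is then a unique-factorization / linear-algebra comparison of the linear factors $d_j\theta-c_j$ (call it $L_j$) and $-b_j\theta+a_j$ (call it $L_j'$) appearing on the two sides. In case 1, when $A$ is triangular we may assume $b=0$ (the upper-triangular case being symmetric), so $b_j=0$ for all $j$, hence $L_j'=a_j$ is a nonzero constant; the identity collapses to $\prod_{w_j>0}L_j^{w_j}=c\cdot\prod_{w_j<0}L_j^{|w_j|}$ for a constant $c$. By Lemma~\ref{lemma:LI-3} the vectors $(c_j,d_j)$ are pairwise linearly independent for $0\le j<D$, so the linear forms $L_0,\dots,L_{D-1}$ are pairwise non-proportional; matching multiplicities of each irreducible linear factor forces $w_j=0$ for all $j$ with $w_j\ne0$, a contradiction. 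In case 2, the hypothesis that $(0,1)A^i$ and $(1,0)A^j$ are linearly independent for all $i,j$ means no $L_i'$ is proportional to any $L_j$, and Lemma~\ref{lemma:LI-1} gives that the $L_j$ are pairwise non-proportional and the $L_j'$ are pairwise non-proportional; so the left- and right-hand products involve disjoint families of pairwise-distinct linear forms, and again matching factors forces $w=0$. Here the bound $s+t<\frac{Dr}2$ rather than $<Dr$ is needed because the generic-degree obstruction has to be applied more carefully --- e.g. one should split the identity and bound each side's degree by $s$ or $t$ separately, or note that after the triangular-case reduction is unavailable the coefficient field is $\fff q$ and one works with the minimal polynomial of $\theta$ over $\ff q$ of degree exactly $Dr$; I would track the constants to see exactly where the factor $\tfrac12$ enters, most likely from needing both $\prod_{w_j>0}L_j^{w_j}$ and $\prod_{w_j<0}L_j^{|w_j|}$ to have degree $<\tfrac{Dr}{2}$ so their product has degree $<Dr$.

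In case 3, $(1,0)$ and $(0,1)A^g$ are linearly dependent for some $0<g<D$, and by Lemma~\ref{lemma:LI-2} we get $(c_i,d_i)=\epsilon_i\gamma(a_{i-g},b_{i-g})$ for all $i$, i.e.\ each $L_i$ is proportional to $L'_{i-g}$ (indices mod $D$). Now the linear forms on the two sides are \emph{not} disjoint, so matching factors no longer immediately kills $w$; instead, collecting the contribution of each proportionality class $\{L_i, L'_{i-g}\}$, the polynomial identity becomes a system of scalar equations relating $w_i$ and $w_{i-g}$, and one shows the only solution with the prescribed sign/size constraints and with the first $m=\gcd(g,D)$ coordinates zero is $w=0$. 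This is precisely where the condition that the first $m$ coordinates vanish is exploited: the $g$-shift permutation of $\{0,\dots,D-1\}$ has cycles of length $D/\gcd(g,D)$, and zeroing out one residue class (of size $\gcd(g,D)$) breaks every cycle, so one can propagate the constraint around each cycle to conclude all $w_i=0$. The main obstacle is this case-3 bookkeeping --- correctly setting up the scalar system from the factor-matching, keeping the signs $\epsilon_i$ and the sign of $w_i$ straight, and verifying that the size constraint $\sum_{w_j>0}w_j,\sum_{w_j<0}|w_j|<\tfrac{Dr}2$ together with the vanishing of the first $m$ coordinates really does force the whole vector to zero; I would model the argument closely on the analogous step in \cite{MartinezReis2016} and on the structure already laid out in Lemma~\ref{lemma:LI-2}.
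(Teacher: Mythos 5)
Your proposal is correct and follows essentially the same route as the paper's proof: use $\theta^{q^{jr}}=A^j\circ\theta$ to clear denominators, invoke the degree bound ($\le s+t$ in the triangular case, $\le 2(s+t)$ otherwise) against the degree-$Dr$ minimal polynomial of $\theta$ to upgrade the evaluation identity to a polynomial identity, and then match multiplicities of the pairwise non-proportional linear factors via Lemmas~\ref{lemma:LI-1} and~\ref{lemma:LI-3}, with Lemma~\ref{lemma:LI-2} yielding the recursion $w_{j+g}=w_j$ that the vanishing of the first $\gcd(g,D)$ coordinates closes in case~3. The only slip is cosmetic: the zeroed index set $\{0,\dots,\gcd(g,D)-1\}$ is a transversal meeting each cycle of the $g$-shift exactly once rather than a single residue class, but your conclusion that every cycle is broken (hence $w=0$) is exactly the paper's argument.
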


\begin{proof}
Clearly $I_{s,t,g}\subseteq I_{s,t}$ for any $1\leq g<D$.
For $(u_0,\ldots,u_{D-1})\in I_{s,t}$, we compute
\[
\Lambda(u_0,\ldots,u_{D-1}) 
 = \prod_{j=0}^{D-1} \left(\theta^{q^{jr}}\right)^{u_j} 
  = \prod_{j=0}^{D-1} \left( A^j \circ \theta\right)^{u_j}.
\]
For any matrix $B$ in the class $[A]\in \PGL_2(\fff{q})$, we have
$A^j\circ \theta = B^j\circ \theta$, so we may substittute $A$ with $\delta^{-1} A$,
where $\delta^2 = \det(A)$. This allows us to assume that $\det(A)=1$, with 
$A\in \GL_2(\ff{q^2})$. We have
\[
 \Lambda(u_0,\ldots,u_{D-1}) 
  = \prod_{j=0}^{D-1} \left( A^j \circ \theta\right)^{u_j}
 = \prod_{j=0}^{D-1} \left( \frac{d_j\theta -c_j}{-b_j\theta+a_j}\right)^{u_j}.
\]

Consider now $(u_0,\ldots,u_{D-1}),(v_0,\ldots,v_{D-1})\in I_{s,t}$ and let
$\Lambda(u_0,\ldots,u_{D-1}) = \Lambda(v_0,\ldots,v_{D-1})$. Then we have
\begin{eqnarray*}
&& \prod_{\substack{0\leq j\leq D-1 \\ u_j>0}} \left( d_j \theta -c_j\right)^{u_j}
\prod_{\substack{0\leq j\leq D-1 \\ u_j<0}} \left(-b_j \theta +a_j\right)^{-u_j}
\prod_{\substack{0\leq j\leq D-1 \\ v_j<0}} \left( d_j \theta -c_j\right)^{-v_j}
\prod_{\substack{0\leq j\leq D-1 \\ v_j>0}} \left(-b_j \theta +a_j\right)^{v_j} \\
&=& \prod_{\substack{0\leq j\leq D-1 \\ v_j>0}} \left( d_j \theta -c_j\right)^{v_j}
\prod_{\substack{0\leq j\leq D-1 \\ v_j<0}} \left(-b_j \theta +a_j\right)^{-v_j}
\prod_{\substack{0\leq j\leq D-1 \\ u_j<0}} \left( d_j \theta -c_j\right)^{-u_j}
\prod_{\substack{0\leq j\leq D-1 \\ u_j>0}} \left(-b_j \theta +a_j\right)^{u_j}.
\end{eqnarray*}
So, $\theta$ is a root of $F(X)-G(X)$, where 
\begin{eqnarray*}
  F(X) &=& \prod_{\substack{0\leq j\leq D-1 \\ u_j>0}} \left( d_j X -c_j\right)^{u_j}
\prod_{\substack{0\leq j\leq D-1 \\ u_j<0}} \left(-b_j X +a_j\right)^{-u_j}
\prod_{\substack{0\leq j\leq D-1 \\ v_j<0}} \left( d_j X -c_j\right)^{-v_j}
\prod_{\substack{0\leq j\leq D-1 \\ v_j>0}} \left(-b_j X +a_j\right)^{v_j} \\
G(X) &=& \prod_{\substack{0\leq j\leq D-1 \\ v_j>0}} \left( d_j X -c_j\right)^{v_j}
\prod_{\substack{0\leq j\leq D-1 \\ v_j<0}} \left(-b_j X +a_j\right)^{-v_j}
\prod_{\substack{0\leq j\leq D-1 \\ u_j<0}} \left( d_j X -c_j\right)^{-u_j}
\prod_{\substack{0\leq j\leq D-1 \\ u_j>0}} \left(-b_j X +a_j\right)^{u_j}.
\end{eqnarray*}
We consider the following three cases:

{\bf Case 1:} Suppose that  $A$ is a triangular matrix.
Observe that if $\theta$ is root of $F_{A,r}(x)$, then $\theta^{-1}$ is root of the polynomial $F_{B,r}(x)$ where $B=\begin{pmatrix} d&c\\ b&a \end{pmatrix}$. Therefore, changing $\theta$ by $\theta^{-1}$, we can suppose, without loss of generality that $A$ is lower triangular matrix.  Thus $b_j=0$ for all $j$ and the degree of the polynomial $F(X)$ and $G(X)$ are respectively
$$\sum_{u_j\ge 0}u_j -\sum_{v_j\ge 0}v_j\le s+t \quad \text{and}\quad \sum_{v_j\ge 0}v_j -\sum_{u_j\ge 0}u_j\le s+t.
$$
Since $\deg(F(X))\le s+t<Dr$ and $\deg(G(X))\le s+t<Dr$ and $F(X)-G(X)$ is divisible by the minimal irreducible polynomial that $\theta$ is root, that has degree $Dr$, it follows that $F(X)=G(X)$.  In particular, these polynomials have the same root of the same order, then, from Lemma~\ref{lemma:LI-3} we conclude that 
$(u_0,\ldots,u_{D-1})=(v_0,\ldots,v_{D-1})$, that is, $\Lambda$ is injective.

{\bf Case 2:} For any $0\leq k,n<D$, from Lemma \ref{lemma:LI-1} the vectors $(c_n,d_n), (a_k,b_k)$ are linearly independent.
From the condition that $s+t\le \frac D2$, it follows that $\deg(F),\deg(G)<Dr$, and therefore,
we have $F(X)=G(X)$. Then, the result follows the same way that the case 1.  

{\bf Case 3:} There exist $0\leq k,n<D$, such that $(c_n,d_n)=\gamma(a_k,b_k)$, for some $\gamma \in \ff{q^2}^{*}$. Let us define $g=n-k$ and $m=\gcd(g,D)-1$. 
In this case, it turns out that we have to restrict $\Lambda$ to the set $I_{s,t,m}$ to maintain
injective. Indeed, by Lemma~\ref{lemma:LI-2}, we have
\[
 d_jX-c_j = \epsilon_j \gamma (b_{j-g}X-a_{j-g}), \ \ \mbox{ for }\ 0\leq j\leq D-1
\]
and we obtain
\begin{eqnarray*}
F(X) &=& \epsilon_F \gamma^{e_F} \prod_{u_j<0} (b_jX-a_j)^{-u_j} \prod_{v_j>0} (b_jX-a_j)^{v_j}
                             \prod_{u_j>0} (b_{j-g}X-a_{j-g})^{u_j} \prod_{v_j<0} (b_{j-g}X-a_{j-g})^{-v_j}\\
G(X) &=& \epsilon_G \gamma^{e_G} \prod_{v_j<0} (b_jX-a_j)^{-v_j} \prod_{u_j>0} (b_jX-a_j)^{u_j}
                             \prod_{v_j>0} (b_{j-g}X-a_{j-g})^{v_j} \prod_{u_j<0} (b_{j-g}X-a_{j-g})^{-u_j},
\end{eqnarray*}
where $\epsilon_F,\epsilon_G\in\{-1,1\}$, $e_F=\sum_{u_j>0} u_j-\sum_{v_j<0}v_j$ and 
$e_G=\sum_{v_j>0} v_j-\sum_{u_j<0}u_j$.
By the definition of $I_{s,t,m}$, again we have $\deg(F),\deg(G)<Dr$, so that $F(X)=G(X)$, and we obtain
\[
\epsilon \gamma^{e_G-e_F}\prod_{j=0}^{D-1} (b_jX-a_j)^{u_j-u_{j+g}} = \prod_{j=0}^{D-1}(b_jX-a_j)^{v_j-v_{j+g}},
\]
with $\epsilon\in\{-1,1\}$. By Lemma~\ref{lemma:LI-1}, we obtain
\[
u_j-u_{j+g} = v_j-v_{j+g}, \ \ 0\leq j\leq D-1.
\]
Let us define $x_j = u_j-v_j$, $0\leq j<D$. Then we have $x_{j+g}=x_j$ for $j\geq 0$ (where we take the indices mod $D$).
Let $J=\{\overline{j}\ :\ x_j=0\}$. We know that $\{\overline{0},\ldots,\overline{(g,D)-1}\}\subseteq J$ 
and the recursion gives us that
$\left\{\overline{a+ig}\ :\ 0\leq a < (g,D),\ i\geq 0  \right\} \subseteq J$. It is easy to see that
$J=\Z_D$, therefore $(u_0,\ldots,u_{D-1})=(v_0,\ldots,v_{D-1})$ and $\Lambda$ is injective.
\end{proof}

\section*{Bibliography}

\end{document}